\newlength{\defbaselineskip}
\newcommand{\setlinespacing}[1]%
           {\setlength{\baselineskip}{#1 \defbaselineskip}}
\numberwithin{equation}{section}
\newtheorem{thm}{Theorem}[section]
\newtheorem{lem}[thm]{Lemma}
\newtheorem{prop}[thm]{Proposition}
\theoremstyle{definition}
\theoremstyle{remark}
\numberwithin{equation}{section}
\begin{document}

\title[The energy-critical INLS]
{The Cauchy problem for the energy-critical inhomogeneous nonlinear Schr\"{o}dinger equation}

\author{Yoonjung Lee and Ihyeok Seo}

\thanks{This research was supported by NRF-2019R1F1A1061316.}

\subjclass[2010]{Primary: 35A01, 35Q55; Secondary: 35B45}
\keywords{Well-posedness, nonlinear Schr\"odinger equations, weighted estimates}

\address{Department of Mathematics, Sungkyunkwan University, Suwon 16419, Republic of Korea}
\email{yjglee@skku.edu}
\email{ihseo@skku.edu}

\begin{abstract}
In this paper we study the Cauchy problem for the energy-critical inhomogeneous nonlinear Schr\"odinger equation
$i\partial_{t}u+\Delta u=\lambda|x|^{-\alpha}|u|^{\beta}u$ in $H^1$.
The well-posedness theory in $H^1$ has been intensively studied in recent years,
but the currently known approaches do not work for the critical case $\beta=(4-2\alpha)/(n-2)$.
It is still an open problem.
The main contribution of this paper is to develop the theory in this case.
\end{abstract}

\maketitle

\section{Introduction}
In this paper we consider the Cauchy problem for the inhomogeneous nonlinear Schr\"odinger equation (INLS)
\begin{equation}\label{INLS}
\left\{
\begin{aligned}
&i \partial_{t} u + \Delta u = \lambda |x|^{-\alpha} |u|^{\beta} u,\quad (x,t) \in \mathbb{R}^n \times \mathbb{R}, \\
&u(x, 0)=u_0 \in H^1,
\end{aligned}
\right.
\end{equation}
where $0<\alpha<2$, $\beta>0$ and $\lambda=\pm1$.
Here, the case $\lambda = 1$ is \textit{defocusing}, while the case $\lambda=-1$ is \textit{focusing}.
This model arises naturally in various physical contexts such as nonlinear optics and plasma physics
for the propagation of laser beams in an inhomogeneous medium (\cite{B,TM}).
This equation enjoys the scale-invariance $u(x,t)\mapsto u_{\delta}(x,t)= \delta^{\frac{2-\alpha}{\beta}} u(\delta x, \delta^2 t)$
for $\delta>0$, and
$\|u_{{\delta}, 0}\|_{\dot{H}^1} = \delta^{1+\frac{2-\alpha}{\beta}-\frac{n}{2}}\|u_0 \|_{\dot{H}^1}$
where $u_{{\delta}, 0}$ denotes rescaled initial data.
If $\beta=(4-2\alpha)/(n-2)$, the scaling preserves the $\dot{H}^1$ norm of $u_0$
and in this case \eqref{INLS} is called the energy-critical INLS.

The case $\alpha = 0$ in \eqref{INLS} is the classical nonlinear Schr\"odinger equation (NLS)
whose well-posedness theory in the energy space $H^1$ has been extensively studied over the past several decades
and is well understood
(see, for example, \cite{K,GV3,GV4} for the subcritical case, $\beta<4/(n-2)$, and \cite{CW} for the critical case, $\beta=4/(n-2)$).
However, much less is known about the INLS which has drawn attention in recent years.
In particular, the critical case $\beta=(4-2\alpha)/(n-2)$ is still an open problem.
The main contribution of this paper is to develop the theory in this case.

Let us first review some known results for the subcritical case, $\beta<(4-2\alpha)/(n-2)$.
We shall assume $n\geq3$ to make the review shorter.
Genoud and Stuart \cite{GS} first studied \eqref{INLS} for the focusing case in the sense of distribution.
Using the abstract argument of Cazenave \cite{C} which does not use Strichartz estimates,
they showed that \eqref{INLS} is well-posed locally, and globally for small initial data,
with the full range of $0<\alpha<2$.
In this case, Farah \cite{F} also showed how small should be the initial data to have global well-posedness
in the spirit of Holmer-Roudenko \cite{HR} for the NLS.
Recently, Guzm\'{a}n \cite{GU} used the contraction mapping argument relying on the classical known Strichartz estimates,
from which the above-mentioned results are obtained with a restriction ($0<\alpha<1$) on the validity of $\alpha$ when $n=3$.
This restriction is a bit improved by Dinh \cite{D} to $1\leq\alpha<3/2$ but for more restricted values $\beta<(6-4\alpha)/(2\alpha-1)$.
Although these results are a bit weak on the validity of $\alpha$ when $n=3$ compared with the result of Genoud-Stuart,
but they provide more information on the solution due to the Strichartz estimates.
In particular, one can know that the solution belongs to $L_t^{q}H_x^{1,r}$ for \textit{Schr\"odinger-admissible} pairs $(q,r)$
for which the Strichartz estimates hold.
In general, such property plays an important role in studying other interesting problems, for instance, scattering and blow up.

However, the critical case $\beta=(4-2\alpha)/(n-2)$ is still an open problem.
In this paper we develop the well-posedness theory in this case.
To this end, we approach to the matter by thinking of the following weighted space-time norms with $\gamma\geq0$
\begin{equation*}
\|f\|_{L_{t}^q L_{x}^r(|x|^{-r\gamma})}=  \bigg( \int_{\mathbb{R}} \bigg(\int_{\mathbb{R}^n}  |x|^{-r\gamma} |f(x)|^r  dx \bigg)^{\frac{q}{r}} dt\bigg)^{\frac{1}{q}}.
\end{equation*}
This weighted space approach was first developed in our previous work \cite{KLS} in which the $L^2$-critical INLS was first solved.
But here we proceed slightly differently from that to make the approach worked for the energy-critical case as well.
Our approach does seem to be more suitable to perform a finer analysis for the INLS model
because the singularity $|x|^{-\alpha}$ in the nonlinear term can be handled more effectively in the weighted setting.

Before stating our results, we first introduce the following weighted Sobolev norms\footnote{We may replace the norm with
$\|f\|_{H^{1,r}(|x|^{-r\gamma})} = \| (1+|\nabla|^2)^{1/2} f \|_{L^r (|x|^{-r\gamma})}$.
Indeed, the two norms coincide with each other if $1<r<\infty$ and $0<r\gamma<n$.
This can be shown by a standard process using a weighted version (Lemma 12.1.4, \cite{MS}) of Mikhlin's multiplier theorem.}
$$\|f\|_{H^{1,r}(|x|^{-r\gamma})} = \|f\|_{L^r(|x|^{-r\gamma})}+\|\nabla f\|_{L^r(|x|^{-r\gamma})}.$$
Our first result is then the following theorem.

\begin{thm}\label{Thm1}
Let $n \geq 3$ and $\beta=(4-2\alpha)/(n-2)$ for $0 < \alpha < \min \lbrace 2, n/2 \rbrace $.
Assume that $\gamma$ satisfy
\begin{align*}
 \frac{\alpha-\beta-1}{\beta+1}  < \gamma < \min \bigg\{ \frac{n-2}{2}, \frac{\alpha}{\beta+1} \bigg\}.
\end{align*}
If $u_0 \in H^1$,
 there exist\, $T>0$ and a unique local solution of the problem \eqref{INLS} with
\begin{equation*}
u \in C([0,T] ; H^{1}) \cap L^{q}([0,T] ; H^{1,r}(|x|^{-r \gamma}))
\end{equation*}
for any $\gamma$-Schr\"odinger admissible pairs $(q, r)$;
\begin{equation}\label{ad}
0<\gamma<1,\quad \frac{\gamma}{2} < \frac{1}{q}\leq \frac12  \quad\text{and}\quad  \frac{2}{q}=n(\frac{1}{2}-\frac{1}{r})+\gamma.
\end{equation}
\end{thm}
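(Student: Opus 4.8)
The plan is to recast the Cauchy problem \eqref{INLS} as the fixed point equation $u=\Phi(u)$,
\begin{equation*}
\Phi(u)(t)=e^{it\Delta}u_0-i\lambda\int_0^t e^{i(t-s)\Delta}\bigl(|x|^{-\alpha}|u|^\beta u\bigr)(s)\,ds ,
\end{equation*}
and to solve it by a contraction argument. The analytic engine is the weighted Strichartz estimate $\|e^{it\Delta}f\|_{L^q_tL^r_x(|x|^{-r\gamma})}\lesssim\|f\|_{L^2}$ for $\gamma$-Schr\"odinger admissible $(q,r)$ — whose scaling is exactly \eqref{ad} — together with its dual form and, via the Christ--Kiselev lemma, the retarded form $\bigl\|\int_0^t e^{i(t-s)\Delta}F\,ds\bigr\|_{L^q_tL^r_x(|x|^{-r\gamma})}\lesssim\|F\|_{L^{\tilde a'}_tL^{\tilde b'}_x(|x|^{\tilde b'\gamma})}$ for $\gamma$-admissible $(q,r)$ and $(\tilde a,\tilde b)$; these I would set up first (cf.\ \cite{KLS}). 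Since $\beta$ is the $\dot H^1$-critical power there is no room for a small constant in a scaling invariant multilinear estimate, so I follow the Cazenave--Weissler device: fix one $\gamma$-admissible pair $(q_0,r_0)$ with $q_0<\infty$ — possible because in \eqref{ad} the admissible range for $1/q$ is the half-open interval $(\gamma/2,1/2]$ and $\gamma<1$ — and contract on the set
\begin{equation*}
X=\Bigl\{u:\ \|u\|_{L^\infty_tH^1}+\!\!\sup_{(q,r)\ \gamma\text{-admissible}}\!\!\|u\|_{L^q_tH^{1,r}(|x|^{-r\gamma})}\le 2C\|u_0\|_{H^1},\quad \|u\|_{L^{q_0}_tL^{r_0}_x(|x|^{-r_0\gamma})}\le\eta\Bigr\},
\end{equation*}
with $\eta>0$ to be fixed small, endowed with the weaker metric $d(u,v)=\|u-v\|_{L^\infty_tL^2_x}+\|u-v\|_{L^{q_0}_tL^{r_0}_x(|x|^{-r_0\gamma})}$, under which $(X,d)$ is complete by a standard weak compactness argument.

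For the linear part the weighted Strichartz estimates (applied to $u_0$ and to $\nabla u_0$, which commutes with $e^{it\Delta}$) give $\|e^{it\Delta}u_0\|_{L^\infty_tH^1}+\sup_{(q,r)}\|e^{it\Delta}u_0\|_{L^q_tH^{1,r}(|x|^{-r\gamma})}\le C\|u_0\|_{H^1}$; moreover $e^{it\Delta}u_0\in L^{q_0}(\mathbb R;L^{r_0}_x(|x|^{-r_0\gamma}))$, so dominated convergence gives $\|e^{it\Delta}u_0\|_{L^{q_0}([0,T];L^{r_0}_x(|x|^{-r_0\gamma}))}\to0$ as $T\to0^+$, and I choose $T$ so that this is $\le\eta/2$. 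Applying the retarded weighted Strichartz estimate (with a suitable $\gamma$-admissible pair on the dual side) together with the dual estimate for the $C_tH^1$ component, the whole problem reduces to bounding the nonlinearity $N(u)=|x|^{-\alpha}|u|^\beta u$ and its gradient in $L^{\tilde a'}_tL^{\tilde b'}_x(|x|^{\tilde b'\gamma})$.

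The heart of the matter, and the step I expect to be the main obstacle, is this nonlinear weighted estimate. Writing $\nabla N(u)=-\alpha|x|^{-\alpha-1}\tfrac{x}{|x|}|u|^\beta u+|x|^{-\alpha}\nabla\bigl(|u|^\beta u\bigr)$ and using $|\nabla(|u|^\beta u)|\lesssim|u|^\beta|\nabla u|$, I would first dispose of the extra singular factor $|x|^{-1}$ in the first term by a weighted Hardy inequality, $\bigl\||x|^{-1}u\bigr\|_{L^{r_0}(|x|^{-r_0\gamma})}\lesssim\|\nabla u\|_{L^{r_0}(|x|^{-r_0\gamma})}$, so that both pieces of $\nabla N(u)$ — and, similarly, $N(u)$ itself — take the schematic form $|x|^{-\alpha}|u|^\beta w$ with $w\in\{u,\nabla u\}$. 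Then one estimates $\||x|^{-\alpha}|u|^\beta w\|_{L^{\tilde a'}_tL^{\tilde b'}_x(|x|^{\tilde b'\gamma})}$ by H\"older in $x$ and $t$, distributing the singular weight $|x|^{\gamma-\alpha}$ among the $\beta$ powers of $u$ and the factor $w$: most copies of $u$ are placed in the small-norm space $L^{q_0}_tL^{r_0}_x(|x|^{-r_0\gamma})$, $w$ into a Strichartz component of $X$, and any residual power of $|x|$ is absorbed by a (weighted) Hardy--Sobolev / Caffarelli--Kohn--Nirenberg inequality against the $L^\infty_t\dot H^1_x$ bound included in $X$. A direct check should show that all the H\"older relations and the $\gamma$-admissibility constraints balance precisely on the scaling line, and the set of inequalities the exponents must satisfy is exactly $(\alpha-\beta-1)/(\beta+1)<\gamma<\alpha/(\beta+1)$ together with $\gamma<(n-2)/2$ — which is why these hypotheses are imposed. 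The outcome should be an estimate of the form
\begin{equation*}
\bigl\|N(u)\bigr\|_{L^{\tilde a'}_tL^{\tilde b'}_x(|x|^{\tilde b'\gamma})}+\bigl\|\nabla N(u)\bigr\|_{L^{\tilde a'}_tL^{\tilde b'}_x(|x|^{\tilde b'\gamma})}\lesssim\|u\|_{L^{q_0}_tL^{r_0}_x(|x|^{-r_0\gamma})}^{\theta}\Bigl(\|u\|_{L^\infty_tH^1}+\!\sup_{(q,r)}\!\|u\|_{L^q_tH^{1,r}(|x|^{-r\gamma})}\Bigr)
\end{equation*}
for some $\theta>0$, and, since $|u|^\beta u$ need not be $C^1$ when $\beta<1$, a companion difference estimate carrying no derivative,
\begin{equation*}
\bigl\|N(u)-N(v)\bigr\|_{L^{\tilde a'}_tL^{\tilde b'}_x(|x|^{\tilde b'\gamma})}\lesssim\bigl(\|u\|^{\theta}_{L^{q_0}_tL^{r_0}_x(|x|^{-r_0\gamma})}+\|v\|^{\theta}_{L^{q_0}_tL^{r_0}_x(|x|^{-r_0\gamma})}\bigr)\,d(u,v) .
\end{equation*}

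Granting these, the proof closes in the usual way: on $X$ the Duhamel term is bounded in the $X$-norm by $C\eta^{\theta}\|u_0\|_{H^1}$ and in the $L^{q_0}_tL^{r_0}_x(|x|^{-r_0\gamma})$ component by $C\eta^{\theta}$, while the difference estimate yields a contraction constant $\lesssim\eta^{\theta}$; choosing first $\eta$ small (so that $C\eta^\theta\le\tfrac12$ and the self-mapping constraints $2C^2\eta^\theta\|u_0\|_{H^1}\le\eta/2$, etc., hold) and then $T$ small as above, $\Phi$ maps $X$ into itself and is a contraction for $d$, so Banach's fixed point theorem produces a unique $u\in X$. Finally $u\in C([0,T];H^1)$ follows from the Strichartz bounds together with the equation, and uniqueness in the full stated class $C([0,T];H^1)\cap L^q([0,T];H^{1,r}(|x|^{-r\gamma}))$ for all $\gamma$-admissible $(q,r)$ follows by the standard argument of comparing two solutions in the $d$-metric on a short subinterval, where both automatically satisfy the smallness constraint, and iterating.
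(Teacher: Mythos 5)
Your overall architecture (Duhamel, weighted Strichartz estimates obtained by interpolating the classical estimates with Kato--Yajima smoothing, Christ--Kiselev for the retarded form, H\"older plus a Caffarelli--Kohn--Nirenberg/Hardy inequality for the nonlinearity, contraction in a weaker metric) is the same as the paper's. But the mechanism by which you propose to close the contraction has a genuine flaw: you take the small quantity to be $\|u\|_{L^{q_0}_tL^{r_0}_x(|x|^{-r_0\gamma})}$, a norm of $u$ itself with no derivative. Under the critical scaling $u_\delta(x,t)=\delta^{(n-2)/2}u(\delta x,\delta^2 t)$ one checks from \eqref{ad} that $\||x|^{-\gamma}u_\delta\|_{L^q_tL^r_x}=\delta^{-1}\||x|^{-\gamma}u\|_{L^q_tL^r_x}$, whereas $\||x|^{-\gamma}\nabla u_\delta\|_{L^q_tL^r_x}$ and the dual-norm of $\nabla(|x|^{-\alpha}|u|^\beta u)$ are scale invariant. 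Your proposed key estimate bounds a scale-invariant left-hand side by $\|u\|_{L^{q_0}_tL^{r_0}_x(|x|^{-r_0\gamma})}^{\theta}$ times quantities that are at worst scale invariant; testing it on $u_\delta$ and letting $\delta\to\infty$ makes the right-hand side vanish while the left-hand side is unchanged, so no such estimate with $\theta>0$ can hold, and the H\"older/CKN bookkeeping you sketch will not produce it. For an energy-critical problem the Cazenave--Weissler small norm must sit at the $\dot H^1$ scaling level. The paper resolves this by making the \emph{entire} norm $\|u\|_{\mathcal{H}_\gamma(I)}=\|u\|_{S_\gamma(I)}+\|\nabla u\|_{S_\gamma(I)}$ small (set $M=2\varepsilon$ with $\|e^{it\Delta}u_0\|_{\mathcal{H}_\gamma(I)}\le\varepsilon$ for small $T$, by dominated convergence, which is legitimate since every admissible $q$ is finite because $1/q>\gamma/2>0$), and by proving in Lemma \ref{le} the estimates $\||x|^{-\alpha}|u|^\beta v\|_{S'_{\tilde\gamma}}\lesssim\|\nabla u\|_{S_\gamma}^\beta\|v\|_{S_\gamma}$ and $\|\nabla(|x|^{-\alpha}|u|^\beta u)\|_{S'_{\tilde\gamma}}\lesssim\|\nabla u\|_{S_\gamma}^{\beta+1}$, where the $\beta$ copies of $u$ are converted into $\|\nabla u\|_{S_\gamma}$ (not into an underivatived norm) via the CKN inequality $\||x|^b f\|_{L^{\beta r_1}}\lesssim\||x|^{-\gamma}\nabla f\|_{L^r}$. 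Your fix is therefore to put the smallness on $\|\nabla u\|_{S_\gamma}$ (or on the whole $\mathcal{H}_\gamma$ norm) rather than on $\|u\|_{L^{q_0}_tL^{r_0}_x(|x|^{-r_0\gamma})}$.

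A secondary point: you place the weight $|x|^{\tilde b'\gamma}$ with the \emph{same} exponent $\gamma$ on the dual side. The paper deliberately uses a second parameter $\tilde\gamma$ there, and the admissible window \eqref{gg}, namely $\frac{\alpha-\tilde\gamma-\beta}{\beta+1}\le\gamma\le\frac{\alpha-\tilde\gamma}{\beta+1}$ with $0<\tilde\gamma<1$ free, is exactly what produces the full range $\frac{\alpha-\beta-1}{\beta+1}<\gamma<\frac{\alpha}{\beta+1}$ claimed in the theorem; freezing $\tilde\gamma=\gamma$ forces the more restrictive condition $\gamma\le\alpha/(\beta+2)$ and loses part of the stated range.
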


We also provide the small data global well-posedness and the scattering results for the energy-critical INLS.
In the critical case, the local solution exists in a time interval depending on the data $u_0$ itself and not on its norm.
Therefore, the conservation laws do not guarantee the existence of a global solution any more.
For this reason, $\|u_0\|_{H^1}$ is usually assumed to be small.

\begin{thm}\label{thm2}
Under the same conditions as in Theorem \ref{Thm1} and the smallness assumption on $\|u_0\|_{H^1}$,
there exists a unique global solution of the problem \eqref{INLS} with
$$u \in C([0,\infty) ; H^{1}) \cap L^{q}([0,\infty) ; H^{1,r}(|x|^{-r \gamma}))$$
for any $\gamma$-Schr\"odinger admissible pairs $(q, r)$.
Furthermore, the solution scatters in $H^1$,
i.e., there exist $\phi \in H^1$ such that
$$\lim_{t \rightarrow \infty} \| u(t)-e^{it\Delta} \phi \|_{H_x^1} =0.$$
\end{thm}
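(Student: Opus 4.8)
The plan is to re-run the Banach fixed point argument behind Theorem~\ref{Thm1}, but now on the whole interval $[0,\infty)$. The point is that, since $\beta=(4-2\alpha)/(n-2)$ is the $\dot H^1$-critical power, the weighted Strichartz estimates for the Duhamel term carry no positive power of the length of the time interval; hence the contraction cannot be closed by taking $T$ small, and one instead closes it globally in time once $\|u_0\|_{H^1}$ is assumed small. Fix a $\gamma$-admissible pair $(q,r)$ as in \eqref{ad} together with the companion pair needed for the inhomogeneous estimate, and for an interval $I\ni 0$ set
\[
\|u\|_{X(I)}=\|u\|_{L_t^\infty(I;H^1)}+\|u\|_{L_t^q(I;H^{1,r}(|x|^{-r\gamma}))}
\]
(one may enlarge this norm by summing over more admissible pairs without affecting the argument), and let $\Phi(u)(t)=e^{it\Delta}u_0-i\lambda\int_0^t e^{i(t-s)\Delta}\big(|x|^{-\alpha}|u|^\beta u\big)(s)\,ds$.

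The technical core — and the step I expect to be the main obstacle — is the \emph{scaling-critical} weighted nonlinear estimate, of the schematic form
\[
\big\||x|^{-\alpha}|u|^\beta u\big\|_{L_t^{q'}(I;H^{1,r'}(|x|^{r'\gamma}))}\lesssim\|u\|_{X(I)}^{\beta}\,\|u\|_{L_t^q(I;H^{1,r}(|x|^{-r\gamma}))},
\]
where the last factor has a finite time exponent, so that its restriction to $[t,\infty)$ tends to $0$ as $t\to\infty$ whenever the global norm is finite. Differentiating the nonlinearity produces, besides the terms $|x|^{-\alpha}|u|^\beta\nabla u$, the more singular term $|x|^{-\alpha-1}|u|^\beta u$ coming from $\nabla(|x|^{-\alpha})$. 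In each case the strategy is to distribute the weight $|x|^{-\alpha}$, respectively $|x|^{-\alpha-1}$, across the $\beta+1$ factors by a weighted H\"older inequality, to absorb the residual power of $|x|$ by a Hardy–Sobolev inequality, and then to check that all the resulting exponents are $\gamma$-admissible; criticality forces these to be exact scaling identities with no slack to spare, and the hypotheses $(\beta+1)(\gamma+1)>\alpha$, $(\beta+1)\gamma<\alpha$ and $\gamma<(n-2)/2$ are precisely what keeps the Hardy–Sobolev inequalities and the admissibility conditions valid. It is here that the weighted framework is indispensable: it renders $|x|^{-\alpha}|u|^\beta u$ and its gradient integrable near the origin with the correct homogeneity, where the classical unweighted Strichartz spaces fail.

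Granting this estimate, the homogeneous and dual weighted Strichartz estimates give $\|\Phi(u)\|_{X([0,\infty))}\le C\|u_0\|_{H^1}+C\|u\|_{X([0,\infty))}^{\beta+1}$ and, using a derivative-free weighted metric for the differences so as not to differentiate $u-v$, $\|\Phi(u)-\Phi(v)\|\le C\big(\|u\|_X^\beta+\|v\|_X^\beta\big)\,d(u,v)$; choosing $\|u_0\|_{H^1}\le\delta$ small enough and the radius $\rho\simeq\|u_0\|_{H^1}$, the map $\Phi$ is a contraction on the ball of radius $\rho$ (complete in the weaker metric), and its fixed point is the asserted global solution in $C([0,\infty);H^1)\cap L^q([0,\infty);H^{1,r}(|x|^{-r\gamma}))$; the conclusion for every $\gamma$-admissible pair follows by inserting $u$ back into the Duhamel formula and applying the weighted Strichartz estimates once more, and uniqueness in the stated class follows by restricting the contraction to small subintervals as in Theorem~\ref{Thm1}. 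Finally, for scattering, set $\phi=u_0-i\lambda\int_0^\infty e^{-is\Delta}\big(|x|^{-\alpha}|u|^\beta u\big)(s)\,ds$; the integral converges in $H^1$ because the dual Strichartz and nonlinear estimates bound its $H^1$ norm by $C\|u\|_{X([0,\infty))}^{\beta+1}<\infty$. From the Duhamel formula,
\[
u(t)-e^{it\Delta}\phi=i\lambda\int_t^\infty e^{i(t-s)\Delta}\big(|x|^{-\alpha}|u|^\beta u\big)(s)\,ds,
\]
so the same estimates restricted to $[t,\infty)$ yield $\|u(t)-e^{it\Delta}\phi\|_{H^1}\le C\|u\|_{X([t,\infty))}^{\beta}\,\|u\|_{L_t^q([t,\infty);H^{1,r}(|x|^{-r\gamma}))}$, whose right-hand side tends to $0$ as $t\to\infty$ since the global finiteness of the $X([0,\infty))$-norm forces the time-integrated weighted norm of the tail to vanish by dominated convergence. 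This establishes the scattering statement.
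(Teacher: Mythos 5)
Your proposal is correct and follows essentially the same route as the paper: the smallness of $\|u_0\|_{H^1}$ is used, via the homogeneous weighted Strichartz estimate, to make the free-evolution term small in the critical weighted norm so that the contraction of Theorem \ref{Thm1} closes with $T=\infty$, and scattering follows from the same dual Strichartz plus weighted nonlinear estimates applied on $[t,\infty)$, whose right-hand side vanishes as $t\to\infty$ by the finiteness of the global norm. The only cosmetic difference is that you define $\phi$ directly by the Duhamel integral at infinity while the paper first shows $e^{-it\Delta}u(t)$ is Cauchy in $H^1$; these are equivalent.
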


The argument in this paper can be also applied to the subcritical case $\beta< (4-2\alpha)/(n-2)$
with the same validity of $\alpha$,
and therefore this improves the above-mentioned results of Guzm\'{a}n \cite{GU} and
Dinh \cite{D} on the validity of $\alpha$.
But this is not a purpose of the present paper.
Particularly when $\alpha=0$ in our approach, it is deduced that $\gamma=0$,
and in this case resulting results also cover the classical results (\cite{GV3, GV4, K, CW}) for the NLS equation.
%See also \cite{CHL} in which a different local well-posedness result adapted to concentration-compactness argument
%is shown for the three dimensional energy-critical INLS with $0<\alpha <4/3$ which is more restrictive than ours ($0<\alpha <3/2$).

To prove the theorems, we first obtain some weighted estimates for the nonlinear term
relying on weighted Strichartz estimates. See Section \ref{sec2}.
These estimates will play a crucial role in Section \ref{sec3},
when proving the well-posedness results by applying the contraction mapping argument along with the Strichartz estimates.

Throughout this paper, the letter $C$ stands for a positive constant which may be different
at each occurrence.
We also denote $A\lesssim B$ to mean $A\leq CB$
with unspecified constants $C>0$.

\section{Weighted estimates}\label{sec2}
This section contains some weighted estimates needed for the proofs of Theorems \ref{Thm1} and \ref{thm2} in the next section.

\subsection{Strichartz estimates}
One of the most basic tools for the well-posedness of nonlinear dispersive equations is
the contraction mapping principle.
The key ingredient in this argument is the availability of Strichartz estimates.
In our case we need to obtain the estimates in the weighted setting.

Before stating them, we introduce some notations.
For $0<\gamma<1$, we set
$$A_{\gamma}=\lbrace (q, r) : (q, r) \text{ is } \gamma\text{-Schr\"odinger admissible} \rbrace,$$
and then define the weighted Stichartz norm
$$\|u\|_{S_\gamma(I)}:= \sup_{(q,r)\in A_\gamma} \big\| |x|^{-\gamma} u\big\|_{L_t^q(I; L_x^r)}$$
and its dual weighted Strichartz norm
$$\|v\|_{S_{\tilde{\gamma}}'(I)}:= \inf_{(\tilde{q},\tilde{r})\in A_{\tilde{\gamma}}, \tilde{q}>2}
\big\| |x|^{ \tilde{\gamma}} v\big\|_{L_t^{\tilde{q}'}(I; L_x^{\tilde{r}'})}$$
for any interval $I\subset\mathbb{R}$.
Now we state the weighted Strichartz estimates:

\begin{prop}\label{pr}
Let $n\ge 3$.
Then we have
\begin{equation}\label{p1}
\|e^{it\Delta} f\|_{S_\gamma(I)} \lesssim \|f\|_{L_x^2},
\end{equation}
\begin{equation}\label{p11}
\bigg\| \int_{-\infty}^\infty e^{-i\tau\Delta}F(\tau) d\tau \bigg\|_{L_x^2} \lesssim  \|F\|_{S_{\tilde{\gamma}}'(I)}
\end{equation}
and
\begin{equation}\label{p2}
\bigg\| \int_{0}^{t} e^{i(t-\tau)\Delta}F(\tau) d\tau \bigg\|_{S_\gamma(I)} \lesssim  \|F\|_{S_{\tilde{\gamma}}'(I)}.
\end{equation}
\end{prop}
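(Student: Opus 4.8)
The plan is to deduce the weighted Strichartz estimates in Proposition \ref{pr} from the classical (unweighted) Strichartz machinery combined with weighted Lebesgue-space smoothing for the Schr\"odinger propagator. The natural starting point is the homogeneous estimate \eqref{p1}. By definition of $S_\gamma(I)$ we must bound $\||x|^{-\gamma}e^{it\Delta}f\|_{L_t^qL_x^r}$ uniformly over all $\gamma$-admissible $(q,r)$. The relation $2/q=n(1/2-1/r)+\gamma$ is exactly the scaling that makes $|x|^{-\gamma}e^{it\Delta}$ behave like a standard (unweighted) Strichartz operator at a shifted integrability, so the cleanest route is to invoke a weighted space-time estimate of the form $\||x|^{-\gamma}e^{it\Delta}f\|_{L_t^qL_x^r}\lesssim\|f\|_{L^2}$ that is available in the literature for this precise scaling relation (these follow, e.g., from the $TT^*$ argument applied to the kernel bound $\|e^{it\Delta}\|_{L^1\to L^\infty}\lesssim|t|^{-n/2}$ together with the Stein--Weiss / Pitt inequality to absorb the weight, which requires $0<\gamma<1$ and the endpoint condition $\gamma/2<1/q\le 1/2$). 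I would state this as the key lemma, cite the relevant weighted Strichartz reference (the same kind used in \cite{KLS}), and then \eqref{p1} is immediate by taking the supremum over $A_\gamma$.

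Next, \eqref{p11} is obtained by duality from \eqref{p1}. Pairing $\int e^{-i\tau\Delta}F\,d\tau$ against an $L^2$ function $g$ and writing the pairing as $\int\langle F(\tau),e^{i\tau\Delta}g\rangle\,d\tau$, I would insert the weight as $|x|^{\tilde\gamma}|x|^{-\tilde\gamma}$ and apply H\"older in $x$ and then in $t$ to get $\le\||x|^{\tilde\gamma}F\|_{L_t^{\tilde q'}L_x^{\tilde r'}}\,\||x|^{-\tilde\gamma}e^{i\tau\Delta}g\|_{L_t^{\tilde q}L_x^{\tilde r}}$; the second factor is $\lesssim\|g\|_{L^2}$ by \eqref{p1}. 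Taking the supremum over $\|g\|_{L^2}\le1$ and then the infimum over admissible $(\tilde q,\tilde r)$ with $\tilde q>2$ yields \eqref{p11}; the restriction $\tilde q>2$ is what keeps $\tilde q'<2$ so that no genuine endpoint issue appears in this step.

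Finally, for the retarded estimate \eqref{p2} I would combine \eqref{p1} and \eqref{p11} with the Christ--Kiselev lemma. Writing $u(t)=\int_0^t e^{i(t-\tau)\Delta}F(\tau)\,d\tau=e^{it\Delta}\int_0^t e^{-i\tau\Delta}F(\tau)\,d\tau$, the "untruncated" operator $F\mapsto e^{it\Delta}\int_{\mathbb R}e^{-i\tau\Delta}F\,d\tau$ maps $S_{\tilde\gamma}'(I)\to S_\gamma(I)$ by composing \eqref{p11} (input to $L^2$) with \eqref{p1} ($L^2$ to output). Since the input space has $\tilde q'<2\le q$ (strictly, because $\tilde q>2$), the exponents are ordered correctly and Christ--Kiselev upgrades the untruncated bound to the truncated one $\int_0^t$, giving \eqref{p2} for each fixed $(q,r)$ and hence after the supremum. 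The main obstacle is the first step: one must pin down the sharp range of $(q,r,\gamma)$ for which the weighted homogeneous estimate $\||x|^{-\gamma}e^{it\Delta}f\|_{L_t^qL_x^r}\lesssim\|f\|_{L^2}$ holds, verify that the admissibility conditions in \eqref{ad} sit inside that range (in particular that $0<\gamma<1$ and $\gamma/2<1/q\le1/2$ are exactly what the weighted $TT^*$ / Stein--Weiss argument demands), and handle the borderline $1/q=1/2$ case — everything after that is soft duality and Christ--Kiselev.
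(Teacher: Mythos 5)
Your treatment of \eqref{p11} (duality from the homogeneous estimate) and of \eqref{p2} ($TT^*$ composition followed by the Christ--Kiselev lemma, using $\tilde q>2$ to guarantee $\tilde q'<2\le q$) coincides with the paper's argument and is fine. The problem is the first step, which you yourself flag as ``the main obstacle'' and then do not actually carry out: the estimate $\||x|^{-\gamma}e^{it\Delta}f\|_{L_t^qL_x^r}\lesssim\|f\|_{L^2}$ for all $(q,r)\in A_\gamma$ is the entire analytic content of the proposition, and deferring it to an unspecified reference leaves a genuine gap. Moreover, the mechanism you sketch for it is not sound as stated: a $TT^*$ argument based on the dispersive bound $\|e^{it\Delta}\|_{L^1\to L^\infty}\lesssim|t|^{-n/2}$ produces an oscillatory kernel in the space variable, not a positive fractional-integral kernel, so the Stein--Weiss/Pitt inequality cannot simply be ``applied to absorb the weight''; that route (without radial symmetry or further decomposition) does not yield the full admissible range \eqref{ad}, and in particular does not explain where the constraints $0<\gamma<1$ and $\gamma/2<1/q\le1/2$ come from.

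The paper fills this gap differently and more cleanly: it complex-interpolates between the classical unweighted Strichartz estimates \eqref{st} and the Kato--Yajima smoothing estimate \eqref{kt} with $s=0$, i.e.\ $\|e^{it\Delta}f\|_{L_t^2L_x^2(|x|^{-2})}\lesssim\|f\|_{L^2}$, using the Bergh--L\"ofstr\"om identities (Lemma \ref{idd}) to identify the interpolation spaces of mixed-norm weighted Lebesgue spaces. The interpolation parameter $\theta$ becomes exactly $\gamma$, and eliminating the auxiliary exponents shows that the resulting family of estimates is precisely the $\gamma$-admissible set \eqref{ad}; this is where the conditions $0<\gamma<1$ and $\gamma/2<1/q\le1/2$ are generated. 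If you replace your appeal to the literature by this interpolation step (or by an equally explicit proof of \eqref{ws} on the stated range), the remainder of your argument goes through as written.
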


\begin{proof}
Let $n\ge 3$ and $0< \gamma, \tilde{\gamma}<1$.
To show the first estimate \eqref{p1}, we may show
\begin{equation}\label{ws}
		\left\| e^{it\Delta} f \right\|_{L_t^{q}(I; L_x^{r}(|x|^{-r\gamma}))} \lesssim \left\| f \right\|_{L^{2}}
\end{equation}
for any $(q, r)\in A_{\gamma}$.
For this we first recall the classical Strichartz estimates
\begin{equation}\label{st}
	\left\| e^{it\Delta} f \right\|_{L_t^{a}(I; L_x^{b})} \lesssim \left\| f \right\|_{L^{2}}
\end{equation}
which holds if and only if $2/a=n(1/2-1/b)$ and $2 \leq a \leq \infty$.
It was first established by Strichartz \cite{St} for the diagonal case $q=r$ and then extended to mixed norms completely as in \eqref{ws}
(\cite{GV4, KT}).
We also need to make use of the Kato-Yajima smoothing estimates
\begin{equation}\label{kt}
	\big\| |\nabla|^s e^{it\Delta} f \big\|_{L_t^2(I;L_x^2(|x|^{-2(1-s)}))} \lesssim \| f \|_{L^{2}}
\end{equation}
which holds if and only if $-(n-2)/2 < s <1/2$.
Kato and Yajima \cite{KY} first discovered this estimate for $0\leq s <1/2$ (see also \cite{BK} for an alternate proof)
and the optimal range was obtained later (\cite{W,Su,V}).

We now deduce \eqref{ws} from using the complex interpolation between \eqref{st} and \eqref{kt}
by appealing to the following complex interpolation space identities.

\begin{lem}[\cite{BL}]\label{idd}
Let $0<\theta<1$ and $1\leq p_0,p_1<\infty$. Given two complex Banach spaces $A_0$ and $A_1$,
$$(L^{p_0}(A_0),L^{p_1}(A_1))_{[\theta]}=L^p((A_0,A_1)_{[\theta]})$$
if $1/p=(1-\theta)/p_0+\theta/p_1$, and if $w=w_0^{p(1-\theta)/p_0}w_1^{p\theta/p_1}$
$$(L^{p_0}(w_0),L^{p_1}(w_1))_{[\theta]}=L^p(w).$$
Here, $(\cdot\,,\cdot)_{[\theta]}$ denotes the complex interpolation functor.
\end{lem}

In fact, using the complex interpolation between \eqref{st} and \eqref{kt} with $s=0$, we first see
\begin{equation*}
	\left\| e^{it\Delta} f \right\|_{\big(L_t^{a}(I; L_x^{b}) ,L_t^2(I;L_x^2(|x|^{-2}))\big)_{[\theta]}} \lesssim \left\| f \right\|_{L^{2}},
\end{equation*}
and by Lemma \ref{idd} we then obtain
\begin{equation*}
	\left\| e^{it\Delta} f \right\|_{L_t^{q}(I; L_x^{r}(|x|^{-r\gamma}))} \lesssim \left\| f \right\|_{L^{2}}
\end{equation*}
where
\begin{equation}\label{con}
\frac{1}{q}=\frac{1-\theta}{a}+\frac{\theta}{2},\quad \frac{1}{r}=\frac{1-\theta}{b}+\frac{\theta}{2}\quad\text{and}\quad
\gamma=\theta
\end{equation}
under the conditions
\begin{equation}\label{con2}
\frac{2}{a}=n(\frac{1}{2}-\frac{1}{b}),\quad 0 < \frac{1}{a}\leq \frac{1}{2} \quad \textnormal{and}\quad 0<\theta<1.
\end{equation}
By replacing $\theta$ with $\gamma$, and then eliminating the redundant exponents $a,b$,
it is not difficult to see that the requirements \eqref{con} and \eqref{con2} are reduced to \eqref{ad}.
Hence we obtain the desired estimate \eqref{ws}.
The second estimate \eqref{p11} follows now from the adjoint form of \eqref{ws}:
\begin{equation}\label{adj}
\left\| \int_{-\infty}^\infty  e^{-i\tau\Delta} F(\tau) d\tau \right\|_{L_x^2} \lesssim
 \| F \|_{L_t^{\tilde{q}'}(I; L_x^{\tilde{r}'}(|x|^{\tilde{r}'\tilde{\gamma}}))}
\end{equation}
for any $(\tilde{q}, \tilde{r})\in A_{\tilde{\gamma}}$.

It remains to show \eqref{p2}.
By the standard $TT^\ast$ argument, \eqref{ws} and \eqref{adj} imply
\begin{equation}\label{wi}
		\left\| \int_{-\infty}^\infty  e^{i(t-\tau)\Delta} F(\cdot, \tau) d\tau \right\|_{L_t^{q}(I; L_x^{r}(|x|^{-r\gamma}))}
\lesssim \left\| F \right\|_{L_t^{\tilde{q}'}(I; L_x^{\tilde{r}'}(|x|^{\tilde{r}'\tilde{\gamma}}))}
\end{equation}
for any $(q, r)\in A_{\gamma}$ and $(\tilde{q}, \tilde{r})\in A_{\tilde{\gamma}}$.
If we further assume $\tilde{q}>2$ (and hence $q>\tilde{q}'$),
we may apply the Christ-Kiselev lemma \cite{CK} to get \eqref{wi} with $\int_{-\infty}^\infty$ replaced by $\int_{0}^t$.
This completes the proof.
\end{proof}

\subsection{Estimates for the nonlinear term}
In this subsection we establish some useful weighted estimates for the nonlinearity $|x|^{-\alpha} |u|^{\beta}u$
in the weighted Strichartz spaces by making use of Proposition \ref{pr} and a special case of
Caffarelli-Kohn-Nirenberg weighted interpolation inequalities.

\begin{lem}\label{le}
Let $n\ge3$ and $\beta=(4-2\alpha)/(n-2)$ for $0< \alpha < \min \lbrace 2, n/2 \rbrace$.
Assume that
\begin{equation}\label{gg}
 \frac{\alpha -\tilde{\gamma}-\beta}{\beta+1} \leq \gamma
 \leq \frac{\alpha -\tilde{\gamma}}{\beta+1}\quad \textnormal{and} \quad \gamma<\frac{n-2}{2}.
\end{equation}
Then we have
\begin{equation*}
\big\||x|^{-\alpha} |u|^{\beta}v\big\|_{S_{\tilde{\gamma}}'(I)} \leq C\| \nabla u\|_{S_{\gamma}(I)}^{\beta}
\| v\|_{S_{\gamma}(I)}
\end{equation*}
and
\begin{equation*}
\|\nabla (|x|^{-\alpha} |u|^{\beta}u)\|_{S_{\tilde{\gamma}}'(I)}
\leq C\| \nabla u\|_{S_{\gamma}(I)}^{\beta+1}.
\end{equation*}
\end{lem}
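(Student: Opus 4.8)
The plan is to estimate the two dual-Strichartz norms by unwinding the definition of $S_{\tilde\gamma}'(I)$ and the weighted space-time norms, fixing a convenient $\tilde\gamma$-admissible exponent pair $(\tilde q,\tilde r)$ with $\tilde q>2$, and then bounding the spatial $L^{\tilde r'}(|x|^{\tilde r'\tilde\gamma})$-norm of the nonlinearity pointwise in $t$ by a product of weighted $L^r$-norms of $\nabla u$ and $v$. The main analytic input will be a Caffarelli--Kohn--Nirenberg–type weighted inequality of the form $\||x|^{-\sigma}w\|_{L^p}\lesssim \||x|^{-\gamma}\nabla w\|_{L^r}$, which lets me trade a factor of $|x|^{-\gamma}$ on $\nabla u$ (resp.\ $\nabla v$) for the weight $|x|^{-\alpha}$ carried by the nonlinear term, at the cost of a prescribed scaling relation among the exponents. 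So the scheme is: (i) choose $(q,r)\in A_\gamma$ appropriately; (ii) apply Hölder in $x$ to split $|x|^{-\alpha}|u|^\beta v$; (iii) apply CKN to each factor to reconstitute the weight $|x|^{-\gamma}$ and the gradient; (iv) apply Hölder in $t$ to pass from the pointwise-in-$t$ bound to the space-time norm and match the admissible $\tilde q'$. The second inequality is reduced to the first after applying the product/chain rule: $\nabla(|x|^{-\alpha}|u|^\beta u)$ produces a term $\alpha|x|^{-\alpha-1}|u|^\beta u$ and terms of the shape $|x|^{-\alpha}|u|^\beta\nabla u$ (and its complex-conjugate partner with $|u|^{\beta-1}u\,\overline{\nabla u}$); the latter are covered directly by the first estimate with $v=\nabla u$, while the former needs one more application of CKN to absorb the extra $|x|^{-1}$.

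More concretely, for the first inequality I would write, with $(\tilde q,\tilde r)\in A_{\tilde\gamma}$ to be chosen,
\[
\big\||x|^{\tilde\gamma}|x|^{-\alpha}|u|^\beta v\big\|_{L_x^{\tilde r'}}
\lesssim \big\| |x|^{-\gamma}\nabla u\big\|_{L_x^r}^{\beta}\,\big\||x|^{-\gamma}\nabla v\big\|_{L_x^r},
\]
which by Hölder in $x$ requires a relation $\frac{\beta+1}{r}+\frac{1}{\text{(gain)}}=\frac{1}{\tilde r'}$ together with the two applications of the CKN inequality
\[
\big\||x|^{-\sigma}w\big\|_{L^p_x}\lesssim \big\||x|^{-\gamma}\nabla w\big\|_{L^r_x},\qquad
\frac1p+\frac{\sigma}{n}=\frac1r+\frac{\gamma-1}{n},\quad 0\le\gamma<\frac{n-2}{2},
\]
the scaling being forced by homogeneity. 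Bookkeeping the powers of $|x|$ shows that the total weight must satisfy $\tilde\gamma-\alpha=-(\beta+1)\gamma+(\text{CKN contributions})$, and eliminating the auxiliary exponents reduces everything to the scaling identity $\beta=(4-2\alpha)/(n-2)$ plus the displayed inequalities \eqref{gg} for $\gamma,\tilde\gamma$; the two-sided constraint on $\gamma$ in \eqref{gg} is exactly what allows the Hölder exponents in $x$ to be both admissible and nonnegative. Then Hölder in $t$ over $I$ gives the $L_t^{\tilde q'}$ bound provided $\frac{\beta+1}{q}=\frac{1}{\tilde q'}$, i.e.\ $\tilde q'=q/(\beta+1)$; since $\beta+1>1$ this gives $\tilde q'<q$ and hence $\tilde q>q'$, and one checks that the value $\tilde q'=q/(\beta+1)$ corresponds to an admissible $(\tilde q,\tilde r)\in A_{\tilde\gamma}$ with $\tilde q>2$, so the infimum defining $\|\cdot\|_{S_{\tilde\gamma}'(I)}$ is controlled. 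Since the right-hand side is then $\lesssim \|\nabla u\|_{S_\gamma(I)}^\beta\|v\|_{S_\gamma(I)}$ (replacing $v$ by $\nabla v$ is not needed — rather, after CKN the factor involving $v$ already appears as $\||x|^{-\gamma}\nabla v\|$, but I would instead use the version of CKN that keeps $v$ without a derivative, i.e.\ $\||x|^{-\sigma}v\|_{L^p}\lesssim\||x|^{-\gamma}v\|_{L^r}$ by Hölder when only a weight is traded, so that $\|v\|_{S_\gamma(I)}$ appears on the right as stated), we are done.

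The main obstacle I anticipate is not any single estimate but the exponent arithmetic: one must verify simultaneously that (a) the spatial Hölder exponents $r_1,\dots,r_{\beta+1}$ with $\sum 1/r_i = 1/\tilde r'$ all lie in the range where the CKN inequality is valid, (b) the induced weights $\sigma_i$ satisfy the CKN scaling identity with the given $\gamma$, (c) the resulting $(\tilde q,\tilde r)$ is genuinely in $A_{\tilde\gamma}$ with $\tilde q>2$, and (d) all of this is consistent with $\beta=(4-2\alpha)/(n-2)$ and $0<\alpha<\min\{2,n/2\}$. The constraint $\gamma<(n-2)/2$ in \eqref{gg} is precisely the endpoint of the CKN range $0\le\gamma<(n-2)/2$, which is why it appears; and the two-sided inequality $\frac{\alpha-\tilde\gamma-\beta}{\beta+1}\le\gamma\le\frac{\alpha-\tilde\gamma}{\beta+1}$ is the feasibility window for choosing the split so that both the "derivative" factors and the "weight-only" factor close. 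For the extra term $|x|^{-\alpha-1}|u|^{\beta}u$ arising in the second inequality, the same computation goes through with $\alpha$ replaced by $\alpha+1$ in one CKN application, and the hypothesis $\gamma<(n-2)/2$ together with $\gamma\ge(\alpha-\tilde\gamma-\beta)/(\beta+1)$ is exactly strong enough to keep the modified exponents admissible; no new constraint is needed.
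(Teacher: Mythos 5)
Your overall scheme is the same as the paper's: split $|x|^{\tilde\gamma-\alpha}|u|^{\beta}v$ by H\"older in $x$ and $t$ using $\frac{1}{\tilde q'}=\frac{\beta+1}{q}$, apply the Caffarelli--Kohn--Nirenberg inequality to convert the weighted $|u|^{\beta}$ factor into $\||x|^{-\gamma}\nabla u\|_{L^r}^{\beta}$, and handle $\nabla(|x|^{-\alpha}|u|^{\beta}u)$ by the product rule, treating the extra term $|x|^{-\alpha-1}|u|^{\beta}u$ with one more application of CKN (the paper uses $b=-\gamma-1$, $a=-\gamma$, $p=q=r$, and indeed no new constraint arises, exactly as you predict). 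However, there is one step in your write-up that would fail as stated: to recover $\|v\|_{S_\gamma(I)}$ rather than $\|\nabla v\|_{S_\gamma(I)}$ you invoke a ``weight-only'' inequality $\||x|^{-\sigma}v\|_{L^p}\lesssim\||x|^{-\gamma}v\|_{L^r}$ ``by H\"older.'' For pure power weights on all of $\mathbb{R}^n$ no such inequality holds when $(\sigma,p)\neq(\gamma,r)$: H\"older would require $|x|^{\gamma-\sigma}$ to lie in some global Lebesgue space, which it never does. The correct move---and what the paper does in \eqref{non0} via the choice \eqref{se}--\eqref{lll}---is to arrange the H\"older split so that the $v$-factor comes out exactly as $\||x|^{-\gamma}v\|_{L_t^qL_x^r}$ with no trading at all; all of the weight- and derivative-trading is pushed onto the single factor $\||x|^{\gamma+\tilde\gamma-\alpha}|u|^{\beta}\|_{L_t^{q/\beta}L_x^{r_1}}$.

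The second, more substantial issue is that your ``exponent arithmetic'' is precisely where the content of the lemma lives, and you leave it unperformed. The paper must verify that a single $r$ exists satisfying simultaneously the CKN admissibility conditions (which reduce to \eqref{c0}), the $\gamma$-admissibility window \eqref{c1} for $(q,r)\in A_\gamma$, and the window \eqref{c2} forced by requiring $(\tilde q,\tilde r)\in A_{\tilde\gamma}$ with $\tilde q>2$; comparing all lower bounds of $1/r$ against all upper bounds is exactly what produces the hypotheses $\gamma<(n-2)/2$, the two-sided constraint in \eqref{gg}, and $\alpha<n/2$. (In particular $\gamma<(n-2)/2$ comes from this feasibility check, not directly from the endpoint of the CKN range.) Your plan correctly identifies all the ingredients and the role of each hypothesis, but without carrying out this elimination the proof is not complete.
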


\begin{proof}
It is sufficient to show that there exist $(q, r) \in A_{\gamma}$ and
$(\tilde{q}, \tilde{r}) \in A_{\tilde{\gamma}}$ with $\tilde{q}>2$
for which
\begin{equation}\label{31}
\big\||x|^{-\alpha} |u|^{\beta}v\big\|_{L_t^{\tilde{q}'}(I;L_x^{\tilde{r}'} (|x|^{\tilde{r}' \tilde{\gamma}}))}
\leq C\||x|^{-\gamma} \nabla u\|_{L_t^{q}(I;L_x^{r})}^{\beta}\| |x|^{-\gamma}v\|_{L_t^{q}(I;L_x^{r})}
\end{equation}
and
\begin{equation}\label{42}
\|\nabla (|x|^{-\alpha} |u|^{\beta}u)\|_{L_t^{\tilde{q}'}(I;L_x^{\tilde{r}'} (|x|^{\tilde{r}' \tilde{\gamma}}))}
\leq C\||x|^{-\gamma} \nabla u\|_{L_t^{q}(I;L_x^{r})}^{\beta+1}
\end{equation}
hold for $\alpha,\beta,\gamma,\tilde{\gamma}$ given as in the lemma.
Let us first set
\begin{equation}\label{se}
\frac{1}{\tilde{q}'}=\frac{\beta+1}{q} \quad \textnormal{and} \quad \frac{1}{\tilde{r}'}=\frac1r+\frac1{r_1}
\end{equation}
with
\begin{equation}\label{lll}
\frac{1}{r_1}=\frac{\beta}{r}-\frac{\gamma (\beta+1)+\tilde{\gamma}-\alpha+\beta}{n},
\end{equation}
by which we easily see that $\beta=(4-2\alpha)/(n-2)$ if $(q, r) \in A_{\gamma}$ and $(\tilde{q}, \tilde{r})\in A_{\tilde{\gamma}}$.

\subsubsection*{Proof of \eqref{31}}

Using H\"older's inequality with \eqref{se}, we first see
\begin{align}\label{non0}
\nonumber\big\||x|^{-\alpha} |u|^{\beta}v\big\|_{L_t^{\tilde{q}'}(I;L_x^{\tilde{r}'} (|x|^{\tilde{r}' \tilde{\gamma}}))}
&=  \big\| |x|^{\tilde{\gamma}-\alpha} |u|^{\beta} v \big\|_{L_t^{\tilde{q}'}(I;L_x^{\tilde{r}'})} \\
&\leq  \big\| |x| ^{\gamma+\tilde{\gamma}-\alpha} |u|^{\beta}\big\|_{L_t^{\frac{q}{\beta}}(I;L_x^{r_1})}
\big\| |x| ^{-\gamma} v \big\|_{L_t^{q}(I;L_x^{r})}.
\end{align}
We shall then use the following special case of
Caffarelli-Kohn-Nirenberg weighted interpolation inequalities.

\begin{lem}[\cite{CKN}] \label{le3}
Let $n\ge 1$. If
\begin{equation*}
1\leq p\leq q<\infty,\quad -n/q<b\leq a \quad\text{and}\quad a-b-1=n/q-n/p,
\end{equation*}
then
\begin{equation}\label{em}
\big\||x|^{b}f\big\|_{L^q} \leq C \big\||x|^{a} \nabla f\big\|_{L^p}.
\end{equation}
\end{lem}

Applying \eqref{em} to the first term on the right-hand side of \eqref{non0} with
$b=\frac{\gamma+\tilde{\gamma}-\alpha}{\beta}$, $q=\beta r_1$, $a=-\gamma$ and $p=r$,
we get
\begin{equation}\label{qwd}
\big\| |x| ^{\gamma+\tilde{\gamma}-\alpha} |u|^{\beta}\big\|_{L_t^{\frac{q}{\beta}}(I;L_x^{r_1})}
= \big\| |x| ^{\frac{\gamma+\tilde{\gamma}-\alpha}{\beta}} u\big\|_{L_t^q(I;L_x^{\beta r_1})}^{\beta}
\leq C\big\| |x| ^{-\gamma} \nabla u\big\|_{L_t^q(I;L_x^r)}^{\beta}
\end{equation}
if
\begin{equation*}
 0< \frac{1}{\beta r_1} \leq \frac{1}{r} \leq 1,\quad
 -\frac{n}{\beta r_1}  < \frac{\gamma+\tilde{\gamma}-\alpha}{\beta} \leq -\gamma,\quad
-\gamma- \frac{\gamma+\tilde{\gamma}-\alpha}{\beta} -1=\frac{n}{\beta r_1}-\frac{n}{r}.
\end{equation*}
Using \eqref{lll}, this requirement is reduced to
\begin{equation}\label{rr2}
0< \frac{1}{r}-\frac{\gamma (\beta+1)}{n\beta}-\frac{\tilde{\gamma}-\alpha+\beta}{n\beta} \leq \frac{1}{r} \leq 1
\end{equation}
and
\begin{equation}\label{rr3}
-\frac{n}{r}+\gamma +\frac{\gamma+\tilde{\gamma}-\alpha}{\beta}+1  < \frac{\gamma+\tilde{\gamma}-\alpha}{\beta} \leq -\gamma.
\end{equation}
We note here that $1/r \leq1$ is trivially satisfied if $(q, r)\in A_\gamma$.
The first inequality of \eqref{rr2}, which is $1+\gamma+\frac{\tilde{\gamma}+\gamma-\alpha}{\beta} <\frac{n}{r}$,
can be also removed by the first two inequalities of \eqref{rr3}.
Consequently, \eqref{rr2} and \eqref{rr3} are reduced to
\begin{equation}\label{c0}
\gamma \ge \frac{\alpha -\tilde{\gamma} -\beta}{\beta+1},\quad
1+\gamma < \frac{n}{r} \quad\text{and}\quad \gamma \leq \frac{\alpha -\tilde{\gamma}}{\beta+1}
\end{equation}
where the first and third ones give the first restriction on $\gamma$ in \eqref{gg}.
%\begin{equation}\label{gg2}
% \frac{\alpha -\tilde{\gamma}-\beta}{\beta+1} \leq \gamma \leq \frac{\alpha -\tilde{\gamma}}{\beta+1}.
%\end{equation}
Meanwhile, if $(q, r) \in A_\gamma$ we see
\begin{equation}\label{c1}
 \frac{1}{2}-\frac{1-\gamma}{n} \leq \frac{1}{r} < \frac{1}{2}
\end{equation}
by combining the conditions in \eqref{ad}.
Similarly, if $(\tilde{q}, \tilde{r}) \in A_{\tilde{\gamma}}$ with $\tilde{q}>2$, we see
$1/2-(1-\tilde{\gamma})/n< 1/\tilde{r} < 1/2$
which implies
\begin{equation}\label{c2}
\frac{1}{2(\beta+1)}+\frac{\gamma}{n}+\frac{\tilde{\gamma}+\beta-\alpha}{n(\beta+1)} < \frac{1}{r} < \frac{1}{2(\beta+1)}+\frac{\gamma}{n}+\frac{1+\beta-\alpha}{n(\beta+1)}
\end{equation}
by using the second condition in \eqref{se} with \eqref{lll}.

Now we have to show that there exists $r$  satisfying the second condition of \eqref{c0}, \eqref{c1} and \eqref{c2} simultaneously
under the first condition in \eqref{gg}.
For this, we make all the lower bounds of $1/r$ be less than each upper one.
We start with the upper bound of \eqref{c1} to compare the lower ones of $1/r$ in \eqref{c0}, \eqref{c1} and \eqref{c2} in turn,
and then the conditions $ \gamma < (n-2)/2$, $\gamma<1$ and $\gamma < \frac{(n-2)\beta}{2(\beta+1)}+ \frac{\alpha-\tilde{\gamma}}{\beta+1}$ follow respectively.
But all these requirements are trivially valid by the assumption \eqref{gg}.
Similarly, using the upper bound of \eqref{c2} implies
$\alpha<n/2$, $(n-4)\beta < 4-2\alpha $ and $\tilde{\gamma}<1$.
Here the last two conditions are trivially satisfied and hence $\alpha<n/2$ is only required.
Therefore, by combining \eqref{non0} and \eqref{qwd}, we obtain \eqref{31} as desired.

\subsubsection*{Proof of \eqref{42}}
We first see that
\begin{align}\label{qaz}
\nonumber\|\nabla (&|x|^{-\alpha} |u|^{\beta}u)\|_{L_t^{\tilde{q}'}(I;L_x^{\tilde{r}'} (|x|^{\tilde{r}' \tilde{\gamma}}))}\\
\nonumber&\lesssim \big\||x|^{-\alpha}|u|^{\beta} \nabla u\big\|_{L_t^{\tilde{q}'}(I;L_x^{\tilde{r}'} (|x|^{\tilde{r}' \tilde{\gamma}}))}
+\big\| |x|^{-\alpha-1} |u|^{\beta}u\big\|_{L_t^{\tilde{q}'}(I;L_x^{\tilde{r}'} (|x|^{\tilde{r}' \tilde{\gamma}}))}\\
&:= B_1 +B_2.
\end{align}
The first term $B_1$ is bounded by using H\"older's inequality with \eqref{se} as
\begin{align*}
B_1&= \big\| |x|^{\tilde{\gamma}-\alpha} |u|^{\beta}\nabla u\big\|_{L_t^{\tilde{q}'}(I; L_x^{\tilde{r}'})}\\
&\leq \big\||x|^{\tilde{\gamma}+\gamma-\alpha} |u|^{\beta}\big \|_{L_t^{\frac{q}{\beta}} (I;L_x^{r_1})}
\big\| |x|^{-\gamma} \nabla  u\big\|_{L_t^q(I; L_x^r)}.
\end{align*}
By applying Lemma \ref{le3} as above (see \eqref{qwd}), we get
\begin{equation}\label{b11}
B_1 \leq C \big\| |x|^{-\gamma}\nabla u\big\|_{L_t^q(I; L_x^r)}^{\beta+1}
\end{equation}
under the same conditions as in the proof of \eqref{31}.
Similarly,
\begin{align*}
\nonumber B_2&=\big\| |x|^{\tilde{\gamma}-\alpha-1} |u|^{\beta} u\big\|_{L_t^{\tilde{q}'}(I; L_x^{\tilde{r}'})}\\
&\leq \big\| |x| ^{\gamma+\tilde{\gamma}-\alpha} |u|^{\beta}\big\|_{L_t^{\frac{q}{\beta}}(I;L_x^{r_1})}
\big\| |x| ^{-\gamma-1} u \big\|_{L_t^{q}(I;L_x^{r})}\\
&\leq C\big\| |x|^{-\gamma} \nabla u\big\|_{L_t^{q}(I; L_x^{r})}^{\beta}\big\| |x| ^{-\gamma-1} u\big\|_{L_t^{q}(I;L_x^{r})}.
\end{align*}
Here we apply Lemma \ref{le3} with $b=-\gamma-1$, $q=r$, $a=-\gamma$, $p=r$ to obtain
\begin{equation*}
\big\| |x| ^{-\gamma-1} u \big\|_{L_t^{q}(I;L_x^{r})}\leq C\big\| |x| ^{-\gamma}\nabla u \big\|_{L_t^{q}(I;L_x^{r})}
\end{equation*}
where $-n/r<-\gamma-1\leq-\gamma$ is required.
But the first inequality is just the second one in \eqref{c0} and the last inequality is trivially satisfied. Hence,
\begin{equation}\label{b22}
 B_2\leq C\big\| |x|^{-\gamma} \nabla u\big\|_{L_t^{q}(I; L_x^{r})}^{\beta+1}
\end{equation}
under the same conditions as in the proof of \eqref{31}.
Consequently, we obtain the desired estimate \eqref{42}
combining \eqref{qaz}, \eqref{b11} and \eqref{b22}.
\end{proof}

\section{The well-posedness in $H^1$}\label{sec3}
In this section we prove Theorems \ref{Thm1} and \ref{thm2} by applying the contraction mapping principle.
The weighted estimates in the previous section play a key role in this step.

\subsubsection*{Proof of Theorem \ref{Thm1}}
By Duhamel's principle, we first write the solution of the Cauchy problem \eqref{INLS} as
\begin{equation*}
\Phi_{u_0}(u) = e^{it \Delta} u_0 - i\lambda  \int_{0}^{t} e^{i(t-\tau)\Delta}F(u) d\tau
\end{equation*}
where $F(u)=|\cdot|^{-\alpha} |u(\cdot,\tau)|^{\beta}u(\cdot, \tau)$.
For appropriate values of $T, M, N>0$, we shall show that $\Phi$ defines a contraction map on
\begin{align*}
X(T,M,N) =\big\lbrace u\in C_t(I;H_x^1)\cap &L_t^q(I;H_x^{1,r}(|x|^{-r\gamma})):\\
&\sup_{t\in I}\|u\|_{H_x^1}\leq N,\, \|u\|_{\mathcal{H}_{\gamma}(I)} \leq M \big\rbrace
\end{align*}
equipped with the distance
$$d(u,v)=\sup_{t\in I}\|u-v\|_{L_x^2}+\|u-v\|_{S_\gamma(I)}.$$
Here, $I=[0, T]$, and $(q, r,\gamma)$ is given as in the theorems.
We also define
$$\| u\|_{\mathcal{H}_\gamma (I)} := \|u\|_{S_\gamma(I)}+\|\nabla u\|_{S_\gamma(I)}$$
and
$$\| u\|_{\mathcal{H}_{\tilde{\gamma}}' (I)} := \|u\|_{S_{\tilde{\gamma}}'(I)}+\|\nabla u\|_{S_{\tilde{\gamma}}'(I)}.$$
Then $(X,d)$ is a complete metric space, which will be shown in the next section.

We now show that $\Phi$ is well defined on $X$.
By Proposition \ref{pr}, we get
\begin{equation}\label{zx}
\|\Phi(u)\|_{\mathcal{H}_\gamma(I)} \leq \| e^{it\Delta} u_0\|_{\mathcal{H}_\gamma(I)}+C\|F(u)\|_{\mathcal{H}'_{\tilde{\gamma}}(I)}
\end{equation}
and
\begin{equation*}
\sup_{t\in I} \|\Phi(u)\|_{H_x^1} \leq C\|u_0\|_{H^1}+
\sup_{t\in I} \bigg\|\int_{0}^{t} e^{i(t-\tau)\Delta}F(u) d\tau\bigg\|_{H_x^1}.
\end{equation*}
Since $\|f\|_{H^1}\lesssim\|f\|_{L^2}+\|f\|_{\dot{H}^1}$, using the fact that $e^{it\Delta}$ is an isometry on $L^2$ and $\dot{H}^1$,
and then applying \eqref{p11}, we see that
\begin{align*}
\sup_{t\in I} \bigg\|\int_{0}^{t} e^{i(t-\tau)\Delta}F(u) d\tau\bigg\|_{H_x^1}
\lesssim\|F(u)\|_{S_{\tilde{\gamma}}'(I)}+\|\nabla F(u)\|_{S_{\tilde{\gamma}}'(I)}.
\end{align*}
Hence,
\begin{equation}\label{zx2}
\sup_{t\in I} \|\Phi(u)\|_{H_x^1} \leq C\|u_0\|_{H^1}+C\|F(u)\|_{\mathcal{H}_{\tilde{\gamma}}'(I)}.
\end{equation}
On the other hand, using Lemma \ref{le}, we get
\begin{align}\label{asb}
\nonumber\|F(u) \|_{\mathcal{H}'_{\tilde{\gamma}}(I)} &\leq C \| \nabla u\|_{S_{\gamma}(I)}^{\beta} \|u\|_{S_{\gamma}(I)}+C\|
\nabla u\|_{S_{\gamma}(I)}^{\beta+1}\\
\nonumber&\leq C \| \nabla u\|_{S_{\gamma}(I)}^{\beta} \|u\|_{\mathcal{H}_{\gamma}(I)}\\
&\leq CM^{\beta+1}
\end{align}
if $u\in X$, and for some $\varepsilon>0$ small enough which will be chosen later we get
\begin{equation}\label{ep}
\|e^{it\Delta}u_0\|_{\mathcal{H}_{\gamma}(I)}\leq \varepsilon
\end{equation}
which holds for a sufficiently small $T>0$ by the dominated convergence theorem.
We now conclude that
$$\|\Phi(u)\|_{\mathcal{H}_\gamma(I)} \leq \varepsilon + CM^{\beta+1}\quad\text{and}\quad
 \sup_{t\in I}\| \Phi(u) \|_{H_x^1} \leq C\|u_0\|_{H^1} + CM^{\beta+1}.$$
Hence we get $\Phi(u)\in X$ for $u\in X$ if
\begin{equation}\label{w}
\varepsilon + C M^{\beta+1}\leq M\quad \textnormal{and} \quad C\|u_0\|_{H^1} + C M^{\beta+1} \leq N.
\end{equation}

Next we show that $\Phi$ is a contraction on $X$.
Using the same arguments used in \eqref{zx} and \eqref{zx2}, we see
\begin{align*}
\|\Phi(u)-\Phi(v)\|_{S_\gamma(I)} \leq C\|F(u)-F(v)\|_{S'_{\tilde{\gamma}}(I)}
\end{align*}
and
\begin{align*}
 \sup_{t\in I}\|\Phi(u)-\Phi(v)\|_{L_x^2} \leq C\|F(u)-F(v)\|_{S'_{\tilde{\gamma}}(I)}.
\end{align*}
By applying Lemma \ref{le} with the simple inequality
$\big|\,|u|^{\beta}u-|v|^{\beta}v\big|\lesssim \big(|u|^{\beta}+|v|^{\beta}\big)|u-v|$, we see
\begin{align*}
\|F(u)-F(v)\|_{S'_{\tilde{\gamma}}(I)}
&\leq C \big(\| \nabla u\|_{S_{\gamma}(I)}^{\beta}+ \| \nabla v\|_{S_{\gamma}(I)}^{\beta}\big) \|u-v\|_{S_{\gamma}(I)}\\
&\leq CM^{\beta}\|u-v\|_{S_{\gamma}(I)}
\end{align*}
as in \eqref{asb}.
Hence, for $u,v\in X$ we obtain $d(\Phi(u), \Phi(v)) \leq CM^{\beta} d(u, v)$.
Now by taking $N= 2C\|u_0\|_{H^1}$ and $M=2 \varepsilon$ and then choosing $\varepsilon>0$ small enough so that
\eqref{w} holds and $CM^\beta \leq1/2$,
it follows that $X$ is stable by $\Phi$ and $\Phi$ is a contraction on $X$.
Therefore, we have proved that there exists a unique local solution with
$u \in C(I ; H^1) \cap L^{q}(I ; H^{1,r}(|x|^{-r \gamma}))$
for any $(q, r) \in A_{\gamma}$.

\subsubsection*{Proof of Theorem \ref{thm2}}
Using \eqref{p1}, we observe that \eqref{ep} is satisfied also if $\|u_0\|_{H^1}$ is sufficiently small;
$$\|e^{it\Delta}u_0\|_{\mathcal{H}_{\gamma}(I)} \leq C\|u_0\|_{H^1} \leq \varepsilon$$
from which one can take $T=\infty$ in the above argument to obtain a global unique solution.
To prove the scattering property, we first note that
\begin{align*}
\big\|e^{-it_2\Delta}u(t_2)-e^{-it_1\Delta}u(t_1)\big\|_{H_x^1}&=\bigg\|\int_{t_1}^{t_2}e^{-i\tau\Delta}F(u)d\tau\bigg\|_{H_x^1}\\
&\lesssim\|F(u)\|_{\mathcal{H}'_{\tilde{\gamma}}([t_1, t_2])}\\
&\lesssim\| u \|_{\mathcal{H}_{\gamma}([t_1, t_2])} ^{\beta+1}\quad\rightarrow\quad0
\end{align*}
as $t_1,t_2\rightarrow\infty$.
This implies that
$\varphi:=\lim_{t\rightarrow\infty}e^{-it\Delta}u(t)$ exists in $H^1$. Furthermore,
$$u(t)-e^{it\Delta}\varphi= i\lambda  \int_{t}^{\infty} e^{i(t-\tau)\Delta}F(u) d\tau,$$
and hence
\begin{align*}
\big\|u(t)-e^{it\Delta}\varphi\big\|_{H_x^1}
&=\bigg\|\int_{t}^{\infty} e^{i(t-\tau)\Delta}F(u) d\tau\bigg\|_{H_x^1}\\
&\lesssim\|F(u)\|_{\mathcal{H}'_{\tilde{\gamma}}([t, \infty) )}\\
&\lesssim\| u \|_{\mathcal{H}_{\gamma}([t, \infty) )} ^{\beta+1} \quad\rightarrow\quad0
\end{align*}
as $t\rightarrow\infty$.
This completes the proof.

\subsubsection*{Note added in proof}
The continuous dependence of the solution $u$ with respect to the initial data $u_0$ follows clearly in the same way:
\begin{align*}
d(u,v)&\lesssim d\big(e^{it \Delta}u_0, e^{it\Delta} v_0\big) +d\bigg(\int_{0}^{t} e^{i(t-\tau)\Delta}F(u)d\tau, \int_{0}^{t} e^{i(t-\tau)\Delta}F(v) d\tau\bigg)\\
&\lesssim\|u_0-v_0\|_{L^2} + \frac{1}{2} d(u,v)
\end{align*}
which implies $$d(u,v) \lesssim   \|u_0-v_0\|_{H^1}.$$
Here, $u,v$ are the corresponding solutions for initial data $u_0,v_0$, respectively.

\section{Appendix}
In this final section we show that $(X,d)$ is a complete metric space.

Let $\{u_k\}$ be a Cauchy sequence in $(X, d)$.
Then it also becomes a Cauchy sequence in $C_t(I; L_x^2) \cap L^q_t(I; L_x^r(|x|^{-r\gamma}))$.
Since this space is complete\footnote{Since $|x|^{-r\gamma}$ is a locally integrable and nonnegative function in $\mathbb{R}^n$ if $\gamma<n/r$, we can define a Radon measure $\mu$ which is canonically associated with $|x|^{-r\gamma}$ by
$\mu(E)=\int_{E} |x|^{-r\gamma} dx$, $E \subseteq \mathbb{R}^n$,
so that $d\mu(x)=|x|^{-r\gamma} dx$. (See p. 5, \cite{HKM} for details.)
Hence we may regard $L^r(|x|^{-r\gamma})$ as $L^r(d\mu)$.},
there exists $u\in C_t(I; L_x^2) \cap L^q_t(I; L_x^r(|x|^{-r\gamma}))$ such that
$d(u_k,u)\rightarrow0$ as $k \rightarrow \infty$.
Namely,
\begin{equation}\label{dis}
\sup_{t \in I}\|u_k-u\|_{ L_x^2} +\|u_k-u\|_{L_t^{q}(I;L_x^r(|x|^{-r\gamma}))} \rightarrow 0
\end{equation}
for all $\gamma$-\textit{Schr\"odinger admissible} $(q, r)$.

Now it is enough to show $u\in X$.
For this we shall use the fact that
every bounded sequence in a reflexive space has a weakly convergent subsequence.
Since $u_k \in C_t(I;H_x^1)$,
we see that $u_k(t) \in H_x^1$ for almost all $t \in I$
and $\|u_k(t)\|_{H_x^1}\leq N$.
Since $H_x^1$ is reflexive, there exists a subsequence, which we still denote by $u_k(t)$,
such that $u_k(t) \rightharpoonup v(t)$ in $H_x^1$
and
\begin{equation*}
\| v(t)\|_{ H_x^1 } \leq \liminf_{k\rightarrow \infty} \|u_k(t)\|_{H_x^1}\leq N.
\end{equation*}
On the other hand, by \eqref{dis}, $u_k(t) \rightarrow u(t)$ in $ L_x^2$.
By uniqueness of limit, we conclude $u(t)=v(t)$ and therefore
$\|u(t)\|_{H_x^1} \leq N$.
Consequently, $u \in C_t(I; H_x^1)$ with $\sup_{t\in I} \|u\|_{H_x^{1}}\leq N$.

It remains to show that $u \in L_t^q (I; H^{1,r}_x(|x|^{-r\gamma}))$ with $\|u\|_{\mathcal{H}_{\gamma}(I)} \leq M$.
For this we shall apply the following lemma with $Y=H_x^{1,r} (|x|^{-r\gamma})$ and $Z=L_x^r(|x|^{-r\gamma})$.
\begin{lem}(Theorem 1.2.5, \cite{C})\label{lem4}
Consider two Banach spaces $Y \hookrightarrow Z$ and $1<q \leq \infty$.
Let $(f_k)_{k\ge0}$ be a bounded sequence in $L^q(I;Z)$ and
let $f:I \rightarrow Z$ be such that $f_k(t)\rightharpoonup f(t)$ in $Z$ as $k \rightarrow \infty$, for almost all $t \in I$.
If $(f_k)_{k\ge0}$ is bounded in $L^q(I; Y)$ and
if $Y$ is reflexive, then $f \in L^q(I; Y)$ and $\|f\|_{L^q(I; Y)} \leq \liminf_{k\rightarrow \infty} \|f_k\|_{L^q(I; Y)}.$
\end{lem}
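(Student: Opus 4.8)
The plan is to prove the lemma by combining weak (or weak-$\ast$) compactness in the Bochner space $L^q(I;Y)$ with lower semicontinuity of the norm, the only genuine work being to \emph{identify} the resulting limit with the given a.e.\ weak limit $f$. First I would reduce to a subsequence realizing the infimum: pick $(k_j)$ with $\|f_{k_j}\|_{L^q(I;Y)}\to L:=\liminf_k\|f_k\|_{L^q(I;Y)}$. Assume first $1<q<\infty$. Since $Y$ is reflexive and $1<q<\infty$, the space $L^q(I;Y)$ is reflexive, so after passing to a further subsequence (not relabeled) we have $f_{k_j}\rightharpoonup g$ in $L^q(I;Y)$ for some $g\in L^q(I;Y)$, and the norms still tend to $L$. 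Once $g=f$ is established, weak lower semicontinuity of the norm gives $\|f\|_{L^q(I;Y)}=\|g\|_{L^q(I;Y)}\le\liminf_j\|f_{k_j}\|_{L^q(I;Y)}=L$, which is exactly the assertion.

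The heart of the matter is showing $g=f$ as $Z$-valued functions. Here I use the continuous embedding $Y\hookrightarrow Z$: every $\phi\in Z^{*}$ restricts to an element of $Y^{*}$. Because $Y$ is reflexive one has the Bochner duality $(L^q(I;Y))^{*}=L^{q'}(I;Y^{*})$, so for measurable $E\subseteq I$ and $\phi\in Z^{*}$ the functional $u\mapsto\int_E\langle\phi,u(t)\rangle\,dt$ lies in it, and weak convergence yields $\int_E\langle\phi,f_{k_j}(t)\rangle\,dt\to\int_E\langle\phi,g(t)\rangle\,dt$. On the other hand the scalar functions $t\mapsto\langle\phi,f_{k_j}(t)\rangle$ are bounded in $L^q(I)$ (since $(f_k)$ is bounded in $L^q(I;Z)$ and $|\langle\phi,f_{k_j}(t)\rangle|\le\|\phi\|_{Z^{*}}\|f_{k_j}(t)\|_Z$) and converge a.e.\ to $t\mapsto\langle\phi,f(t)\rangle$ by the hypothesis $f_{k_j}(t)\rightharpoonup f(t)$ in $Z$; a sequence bounded in $L^q$ that converges pointwise a.e.\ converges weakly in $L^q$ to its pointwise limit, so $\int_E\langle\phi,f_{k_j}(t)\rangle\,dt\to\int_E\langle\phi,f(t)\rangle\,dt$ as well. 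Comparing the two limits gives $\int_E\langle\phi,g(t)\rangle\,dt=\int_E\langle\phi,f(t)\rangle\,dt$ for all $E$, hence $\langle\phi,g(t)\rangle=\langle\phi,f(t)\rangle$ for a.e.\ $t$, for each fixed $\phi\in Z^{*}$.

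It remains to pass from this $\phi$-by-$\phi$ equality to $g(t)=f(t)$ in $Z$ a.e. The strongly measurable functions $f_{k_j}$ and $g$ take values, a.e., in a common separable closed subspace $Z_0\subseteq Z$; being closed, $Z_0$ is weakly closed, so the pointwise weak limit $f(t)$ lies in $Z_0$ for a.e.\ $t$ as well. Fixing a countable norming family $\{\phi_m\}\subseteq Z_0^{*}$ (extended to $Z^{*}$ by Hahn--Banach) and applying the previous step to each $\phi_m$ on a common full-measure set forces $g(t)=f(t)$ a.e., completing the case $1<q<\infty$. For the endpoint $q=\infty$, reflexivity of $Y$ makes $Y^{*}$ reflexive, hence endowed with the Radon--Nikod\'ym property, so $L^{\infty}(I;Y)=(L^{1}(I;Y^{*}))^{*}$; a bounded sequence then has a weak-$\ast$ convergent subsequence, the same testing against $\mathbf{1}_E\otimes\phi$ with $\phi\in Z^{*}$ identifies the weak-$\ast$ limit with $f$, and weak-$\ast$ lower semicontinuity of the norm gives the bound. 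I expect the identification $g=f$ to be the main obstacle: the delicate point is that weak convergence in $L^q(I;Y)$ is tested against $L^{q'}(I;Y^{*})$ while the hypothesis only supplies pointwise weak convergence in the larger space $Z$, so one must bridge the two through the product functionals $\mathbf{1}_E\otimes\phi$ and then control the $\phi$-dependence of the exceptional null sets via separability of the essential ranges.
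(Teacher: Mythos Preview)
The paper does not supply a proof of this lemma; it is quoted directly from Cazenave's monograph (Theorem~1.2.5 in \cite{C}) and used as a black box in the Appendix. So there is no ``paper's own proof'' against which to compare.

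Your argument is essentially the standard one and is correct for $1<q<\infty$: reflexivity of $L^q(I;Y)$, extraction of a weak limit $g$, identification $g=f$ by testing against $\mathbf{1}_E\otimes\phi$ with $\phi\in Z^{*}$, and passage to a countable norming family via essential separability are all sound. The one point that deserves a word of care is the endpoint $q=\infty$. The duality $L^{\infty}(I;Y)=(L^{1}(I;Y^{*}))^{*}$ (valid since $Y^{*}$, being reflexive, has the Radon--Nikod\'ym property) yields weak-$*$ compactness of bounded sets, but \emph{sequential} weak-$*$ compactness requires the predual $L^{1}(I;Y^{*})$ to be separable, i.e.\ $Y^{*}$ separable, which does not follow from reflexivity of $Y$ alone. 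The repair is routine and in the spirit of your own separability reduction: since each $f_k$ is strongly measurable, all the $f_k$ take values a.e.\ in a common closed separable subspace $Y_0\subseteq Y$; then $Y_0$ is reflexive and separable, hence $Y_0^{*}$ is separable, $L^{1}(I;Y_0^{*})$ is separable, and one extracts the weak-$*$ convergent subsequence in $L^{\infty}(I;Y_0)$ instead. With that adjustment the proof goes through.
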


Indeed, since $u_k \in X$ is a bounded sequence in $L_t^{q}(I;L_x^r(|x|^{-r\gamma}))$, we first note that
\begin{equation*}
\|u\|_{L_t^{q}(I;L_x^r(|x|^{-r\gamma}))} \leq \|u-u_k\|_{L_t^{q}(I;L_x^r(|x|^{-r\gamma}))}+\|u_k\|_{L_t^{q}(I;L_x^r(|x|^{-r\gamma}))}\leq M
\end{equation*}
as $k\rightarrow\infty$, so that $u \in L_t^q(I; L^r_x(|x|^{-r\gamma}))$.
Thus we see that  $u(t) \in L_x^r( |x|^{-r\gamma})$ for almost all $t \in I$ and  $u_k(t) \rightarrow u(t)$ in $L_x^r( |x|^{-r\gamma})$.
On the other hand, we see that $u_k$ is bounded in $L_t^q(I; H_x^{1,r}(|x|^{-r\gamma}))$ as
$$\| u_k\|_{L_t^q(I; H_x^{1,r}(|x|^{-r\gamma}))} \leq \|u_k\|_{\mathcal{H}_{\gamma}}\leq M.$$
Then, since $H_x^{1, r}(|x|^{-r\gamma})$ is reflexive as long as $1<r<\infty$ and $r\gamma<n$ (see p. 13, \cite{HKM}),
the above lemma implies now
\begin{equation*}
u \in L_t^q(I;H_x^{1,r} (|x|^{-r\gamma}))
\end{equation*}
and
$$\| u\|_{L_t^q (I; H_x^{1,r} (|x|^{-r\gamma}))} \leq \liminf_{k\rightarrow \infty} \|u_k\|_{L_t^q (I; H^{1,r}_x (|x|^{-r\gamma})) } \leq M$$
for all $\gamma$-\textit{Schr\"odinger admissible} $(q, r)$.
Hence, we get $\|u\|_{\mathcal{H}_{\gamma}(I)} \leq M$.

\

\noindent\textbf{Acknowledgment.}
The authors would like to thank Yonggeun Cho for informing us of the paper \cite{CHL} where
the energy-critical case in three dimensions was treated under radial symmetry.

\end{document}